\begin{document}
\setcounter{tocdepth}{1}

\newtheorem*{acknowledgement}{Acknowledgements}
\newtheorem{theorem}{Theorem}[section]
\newtheorem{proposition}[theorem]{Proposition}
\newtheorem{conjecture}[theorem]{Conjecture}
\def\theconjecture{\unskip}
\newtheorem{corollary}[theorem]{Corollary}
\newtheorem{lemma}[theorem]{Lemma}
\newtheorem{sublemma}[theorem]{Sublemma}
\newtheorem{fact}[theorem]{Fact}
\newtheorem{observation}[theorem]{Observation}
\newtheorem{definition}[theorem]{Definition}
\newtheorem{notation}[theorem]{Notation}
\newtheorem{remark}[theorem]{Remark}
\newtheorem{question}[theorem]{Question}
\newtheorem{questions}[theorem]{Questions}

\newtheorem{example}[theorem]{Example}
\newtheorem{problem}[theorem]{Problem}
\newtheorem{exercise}[theorem]{Exercise}

\newcommand{\cA}{\mathcal{A}}
\newcommand{\cB}{\mathcal{B}}
\newcommand{\cC}{\mathcal{C}}
\newcommand{\cD}{\mathcal{D}}
\newcommand{\cE}{\mathcal{E}}
\newcommand{\cG}{\mathcal{G}}
\newcommand{\cH}{\mathcal{H}}
\newcommand{\cI}{\mathcal{I}}
\newcommand{\cJ}{\mathcal{J}}
\newcommand{\cK}{\mathcal{K}}
\newcommand{\cL}{\mathcal{L}}
\newcommand{\cM}{\mathcal{M}}
\newcommand{\cN}{\mathcal{N}}
\newcommand{\cO}{\mathcal{O}}
\newcommand{\cS}{\mathcal{S}}
\newcommand{\cT}{\mathcal{T}}
\newcommand{\cU}{\mathcal{U}}
\newcommand{\cV}{\mathcal{V}}
\newcommand{\cW}{\mathcal{W}}
\newcommand{\cX}{\mathcal{X}}
\newcommand{\cY}{\mathcal{Y}}
\newcommand{\cZ}{\mathcal{Z}}
\newcommand{\bA}{\mathbb{A}}
\newcommand{\bB}{\mathbb{B}}
\newcommand{\bC}{\mathbb{C}}
\newcommand{\bD}{\mathbb{D}}
\newcommand{\bF}{\mathbb{F}}
\newcommand{\bG}{\mathbb{G}}
\newcommand{\bH}{\mathbb{H}}
\newcommand{\bI}{\mathbb{I}}
\newcommand{\bJ}{\mathbb{J}}
\newcommand{\bK}{\mathbb{K}}
\newcommand{\bL}{\mathbb{L}}
\newcommand{\bM}{\mathbb{M}}
\newcommand{\bN}{\mathbb{N}}
\newcommand{\bO}{\mathbb{O}}
\newcommand{\bP}{\mathbb{P}}
\newcommand{\bQ}{\mathbb{Q}}
\newcommand{\bR}{\mathbb{R}}
\newcommand{\bS}{\mathbb{S}}
\newcommand{\bT}{\mathbb{T}}
\newcommand{\bU}{\mathbb{U}}
\newcommand{\bV}{\mathbb{V}}
\newcommand{\bW}{\mathbb{W}}
\newcommand{\bX}{\mathbb{X}}
\newcommand{\bY}{\mathbb{Y}}
\newcommand{\bZ}{\mathbb{Z}}

\newcommand{\R}{\mathbb R}
\newcommand{\Q}{\mathbb Q}
\newcommand{\Z}{\mathbb Z}
\newcommand{\C}{\mathbb C}
\newcommand{\N}{\mathbb N}
\newcommand{\T}{\mathbb T}
\newcommand{\F}{\mathcal F}
\newcommand{\A}{\tilde{A}_{4,d}}
\newcommand{\Aq}{\tilde{A}_{q,d}}
\newcommand{\B}{\mathbb B}
\renewcommand{\S}{\mathcal S}
\newcommand{\w}{\omega}
\newcommand{\e}{\varepsilon}
\newcommand{\g}{\gamma}
\newcommand{\p}{\varphi}
\newcommand{\s}{\psi}
\newcommand{\z}{\zeta}
\renewcommand{\l}{\ell}
\renewcommand{\a}{\alpha}
\renewcommand{\b}{\beta}
\renewcommand{\k}{\kappa}
\newcommand{\m}{\textfrak{m}}
\renewcommand{\P}{\textfrak{p}}

\newcommand{\op}[1]{\operatorname{{#1}}}
\newcommand{\im}{\operatorname{Im}}
\newcommand{\pd}[2]{\frac{\partial #1}{\partial #2}}
\newcommand{\rd}[2]{\frac{d #1}{d #2}}
\newcommand{\ip}[2]{\langle #1 , #2 \rangle}
\renewcommand{\j}[1]{\langle #1 \rangle}
\newcommand{\cl}{\operatorname{cl}}
\newcommand{\into}{\hookrightarrow}

\newcommand{\mc}{\mathcal}
\newcommand{\mb}{\mathbb}
\newcommand{\f}{\mathfrak}
\newcommand{\ssp}{\sqsubseteq}
\newcommand{\cc}{\overline}
\renewcommand{\Re}{\text{Re}}
\renewcommand{\Im}{\text{Im}}


\def\bE{{\mathbf E}}
\def\bedagger{{\mathbf e}^\dagger}
\def\bF{{\mathbf F}}
\def\reals{\mathbb{R}}
\def\be{{\mathbf e}}
\def\symdif{\,\Delta\,}
\def\bH{{\mathbf H}}
\def\bV{{\mathbf V}}
\def\bG{{\mathbf G}}
\def\Psharp{P^\sharp}
\def\bA{{\mathbf A}}
\def\bI{{\mathbf I}}
\def\bEstar{{\mathbf E}^\star}
\def\bAstar{{\mathbf A}^\star}
\def\be{{\mathbf e}}
\def\bv{{\mathbf v}}
\def\bw{{\mathbf w}}
\def\br{{\mathbf r}}
\def\Star{\star}
\def\doublestar{\dagger\ddagger}
\def\unitQ{{\mathbf Q}}
\def\Gl{\operatorname{Gl}}
\def\eps{\varepsilon}
\def\naturals{{\mathbb N}}
\def\rplus{{\mathbb R}^+}
\def\scriptt{{\mathcal T}}
\def\integers{{\mathbb Z}}
\def\one{{\mathbf 1}}

\def\repair{\medskip\hrule\hrule\medskip}

\title{Special cases of power decay in multilinear oscillatory integrals}
\author{Dong Dong}
\address{Center for Scientific Computation and Mathematical Modeling\\
University of Maryland\\
College Park, MD 20742, USA}
\email{ddong12@cscamm.umd.edu}
\author{Dominique Maldague}
\address{Department of Mathematics\\
University of California, Berkeley\\
Berkeley, CA 94720, USA}
\email{dmal@math.berkeley.edu}
\author{Dominick Villano}
\address{Department of Mathematics\\
University of Pennsylvania\\
Philadelphia, PA 19104, USA}
\email{dvillano@sas.upenn.edu}

\date{\today}

\let\thefootnote\relax\footnote{\emph{Key words and phrases}: oscillatory integrals, decay}
\let\thefootnote\relax\footnote{\emph{2010 Mathematics Subject Classification}: 42B10}
\maketitle

\begin{abstract}
We use tools from the multilinear oscillatory integral program developed by Christ, Li, Thiele, and Tao to treat special
cases which are not covered by existing theory. Consideration of special cases leads to an extended class of examples for which $\lambda$-power decay holds. 
\end{abstract}

\section{Introduction} 
Multilinear oscillatory integrals play an important role in analysis \cite{BCCT,C,cltt, CO, FL, GGX, G, GX, X}. In a seminal paper \cite{cltt}, Christ, Li, Tao, and Thiele (CLTT) obtained decay estimates for the following operator:
\begin{equation}
    (f_1,f_2,\dots,f_n)\mapsto \int_{\R^m} e^{i\lambda P(x)}\prod_{j=1}^nf_j(\pi_j(x))\eta(x)\,dx.
\end{equation}
Here $\lambda\in\R$, $P: \R^m\to\R$ is a real-valued polynomial, $m\ge 2$, $\eta$ is compactly supported, and each $\pi_j$ is the orthogonal projection from $\R^m$ to a subspace $V_j$ of dimension $\kappa<m$, which is assumed to be independent of $j$. 

To state CLTT's results, we first recall some definitions from \cite{cltt}.

\begin{definition}
A polynomial $P$ is called degenerate with respect to the projections $\left\{\pi_j \right\}$ if there exist polynomials $\left\{ p_j \right\}$ such that $P = \sum_j p_j \circ \pi_j$.  If no such polynomials exist, $P$ is said to be nondegenerate.
\end{definition}

For a degree $d\ge 1$, fix a norm $\|\cdot\|_d$ on the finite-dimensional space of real-valued polynomials $p:\R^m\to\R$ of degree at most $d$.  
\begin{definition} 
    A collection of polynomials $P_{\alpha}$ and orthogonal projections $\pi_j^{\alpha}$ is uniformly nondegenerate if there exists a positive constant c such that
    $$
        \inf_{\alpha}\inf_{p_j} \|P_\a- \sum p_j \circ \pi_j^\a \|_d \geq c >0
    $$
where the infimum is taken over real-valued polynomials $p_j$ of degree at most $d$. 
\end{definition}

\begin{definition} A collection $\{V_j:1\le j\le n\}$ of $\kappa$-dimensional subspaces of $\R^m$ is said to be in general position if any subcollection of cardinality $k\ge 1$ spans a subspace of dimension $\min(k\kappa,m)$. 
\end{definition}

CLTT observe that nondegeneracy is necessary for the power decay property.  In general, sufficiency remains open.  However, many special cases have been handled, including the following theorem:

\begin{theorem} [Theorem 2.1, \cite{cltt}] \label{maincltt}
    Suppose $n < 2m$ and $\{V_j: 1\le j\le n\}$ is a family of one-dimensional subspaces of $\R^m$ that lies in general position. Then
    \begin{equation} \label{eq: L2 thm}
        \left|\int_{\R^m} e^{i\lambda P(x)}\prod_{j=1}^nf_j(\pi_j(x))\eta(x)\,dx\right|\le C(1+|\lambda|)^{-\epsilon}\prod_{j=1}^n \|f_j\|_{L^2}
    \end{equation}
    for all polynomials $P$ of bounded degree which are uniformly nondegenerate with respect to $\{V_j\}$, for all functions $f_j\in L^2(\R)$, with uniform constants $C>0, \epsilon>0$.
\end{theorem}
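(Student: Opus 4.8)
My plan is to reduce the general multilinear estimate to one-dimensional van der Corput bounds by an induction that peels functions off the integral, at each stage producing either a ``free'' variable to integrate in or a genuinely bilinear interaction, and using the general-position and nondegeneracy hypotheses to guarantee that one of these always occurs. \emph{Reductions.} Identify each $\pi_j$ with a linear functional $\ell_j\colon\R^m\to\R$, so that $f_j\circ\pi_j$ is a function of $\ell_j(x)$; general position then says any $k\le m$ of the $\ell_j$ are linearly independent. Since $e^{i\lambda p_j(\pi_j(x))}f_j(\pi_j(x))$ is again of the form $\tilde f_j(\pi_j(x))$ with $\|\tilde f_j\|_{L^2}=\|f_j\|_{L^2}$, any component of $P$ of the form $\sum_j p_j\circ\pi_j$ may be absorbed into the $f_j$; together with nondegeneracy this lets me assume $P$ has degree exactly $d$ and, after choosing coordinates so that a maximal linearly independent subfamily of the $\ell_j$ is a coordinate subset, that $P$ has no monomial which is a power of a single $\ell_j$. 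I would first establish a \emph{qualitative} bound (some $\epsilon=\epsilon(P,\{\ell_j\})>0$ works) and only afterwards recover the uniform $C,\epsilon$ by a compactness/semicontinuity argument over the compact parameter space of admissible data, following the device in \cite{cltt}; this last step is itself nontrivial.

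\emph{The engine.} The basic input is the bilinear estimate: if $\phi\colon\R^2\to\R$ is a nondegenerate polynomial (not a sum of a function of $s$ and a function of $t$) and $\psi$ is a bump, then $|\int_{\R^2}e^{i\lambda\phi(s,t)}g(s)h(t)\psi\,ds\,dt|\le C|\lambda|^{-\epsilon}\|g\|_{L^2}\|h\|_{L^2}$ with $C,\epsilon$ depending on $\phi$; this is the case $m=n=2$ of the theorem, and is proved by one Cauchy--Schwarz in $s$, expanding the square, and a van der Corput/sublevel-set estimate in $t$, which reduces matters to the $L^2\to L^2$ boundedness, with norm $O(|\lambda|^{-\delta})$, of an operator whose kernel is $\lesssim\min\{1,(|\lambda|^{\delta'}|s-s'|)^{-1}\}$. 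I would also repeatedly use the trivial estimate $\int_{\R^m}\prod_{j=1}^r|f_j(x_j)|\,|\eta(x)|\,dx\le C\prod_{j=1}^r\|f_j\|_{L^2}$, valid for $r\le m$ and compactly supported $\eta$.

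\emph{The induction.} I would argue by induction using two moves. \textbf{Move 1 (free variable).} If $n<m$, some coordinate $x_i$ is seen by no $\ell_j$. If $P$ depends on $x_i$, carry out the $x_i$-integration first: since some $\partial_{x_i}^k P\not\equiv0$, van der Corput (with a standard covering of the zero set of that derivative) produces a pointwise factor $|\lambda|^{-\epsilon}$, after which the $n$ remaining functions are absorbed by the trivial estimate. If $P$ does not depend on $x_i$, integrate $x_i$ out and recurse in $\R^{m-1}$, where the $\ell_j$ are still in general position and $P$ is still nondegenerate, and $n<2(m-1)$ because $n<m$. \textbf{Move 2 (doubling).} When $m\le n<2m$, choose a monomial of $P$ involving at least two of the coordinates, say $x_1,x_2$, apply Cauchy--Schwarz in $f_1\otimes f_2$, and expand the square by doubling the remaining ``axis'' variables $x_3,\dots,x_m$ into $x_3,\dots,x_m,x_3',\dots,x_m'$; the new integral lives in $\R^{2m-2}$, has phase $P(x)-P(x')$ with $x,x'$ agreeing in the $x_1,x_2$ slots, and involves only the $2(n-2)$ functions $f_3,\dots,f_n$ and their conjugates. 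The inequality $n<2m$ is exactly what guarantees $2(n-2)<2(2m-2)$, so the doubled datum remains admissible, while $x_1,x_2$ have become free and can be treated by Move 1 and van der Corput (the chosen monomial forces the doubled phase to depend on at least one of $x_1,x_2$). When $n=m$ this already closes the argument.

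\emph{Main obstacle.} The serious difficulty is that, when $m<n<2m$, some of the functions $f_j$ are honestly transverse to the chosen pair of coordinates, so Move 2 does not fully free $x_1,x_2$ and simultaneously increases both $m$ and $n$; the induction therefore cannot close on $(m,n)$ and must be organised around a finer complexity measure --- for instance the degree of the phase restricted to the not-yet-free variables, or a weighted count of the functions still tied to such variables --- together with a careful choice, at each step, of which monomial, which pair of coordinates, and which block of variables to double so that the doubled configuration is again in general position. Keeping track of this bookkeeping is where the bulk of the work lies. A second, more technical, obstacle is quantitative: each Cauchy--Schwarz doubling must be accompanied by a lemma showing the doubled phase is uniformly nondegenerate with a constant controlled by the old one, so that the van der Corput estimate invoked at the bottom of the recursion yields a power of $|\lambda|$ whose exponent does not depend on $P$ or $\{\ell_j\}$.
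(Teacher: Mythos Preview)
This theorem is not proved in the present paper at all: it is Theorem~2.1 of \cite{cltt}, quoted here as background, and the paper only offers the one-line summary that ``the proof of Theorem~\ref{maincltt} proceeds by slicing the ambient space according to the subspaces and then employing notions of uniformity originating in the work of Gowers~\cite{Gow}.'' So there is no in-paper proof to compare your proposal against.

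That said, your proposal is genuinely incomplete, and you flag the gap yourself. Your Move~2 presupposes that after Cauchy--Schwarz in $f_1\otimes f_2$ the remaining functions $f_3,\dots,f_n$ depend only on the doubled variables $x_3,\dots,x_m,x_3',\dots,x_m'$, so that $x_1,x_2$ become free. But when $n>m$ the $\ell_j$ for $j\ge 3$ are \emph{not} coordinate functionals; several of them see $x_1$ or $x_2$, and after doubling those $f_j$'s still depend on $x_1,x_2$. Your ``Main obstacle'' paragraph is exactly this, and you do not resolve it --- you only propose that one should find a finer complexity measure and a clever choice of which block to double. That is the whole content of the theorem; without it you have only re-proved the easy case $n\le m$ (your Move~1) and the bilinear case $n=m=2$.

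The route actually taken in \cite{cltt} is organised differently: rather than an ad hoc induction on $(m,n)$, one slices $\R^m$ into two-dimensional planes adapted to the subspaces and controls the inner integrals using Gowers-type uniformity norms $U^k$, whose Cauchy--Schwarz inequality performs the iterated doubling in a structured way that automatically keeps track of which functions remain coupled. The nondegeneracy hypothesis translates into a lower bound on a suitable Gowers norm of $e^{i\lambda P}$, and the condition $n<2m$ is what limits the order $k$ of the uniformity norm one must invoke. Your instinct that Cauchy--Schwarz doubling is the engine is correct, but the bookkeeping you leave open is precisely what the Gowers-norm machinery packages up.
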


We refer to this type of inequality, with a negative power of $\lambda$ in the upper bound, as a $\lambda$-decay result. The proof of Theorem \ref{maincltt} proceeds by slicing the ambient space according to the subspaces and then employing notions of uniformity originating in the work of Gowers \cite{Gow}.  

By the example stated after Theorem 2.1 in \cite{cltt}, the condition $n<2m$ is necessary for the right-hand side of \eqref{eq: L2 thm} to be a product of $L^2$ norms of the $f_j$. Note that because every function $f_j$ may be taken to have compact support, $L^2$ norms are stronger than $L^\infty$ norms. It is interesting to investigate if the condition $n<2m$ can be relaxed, provided we replace some of the $L^2$ norms on the right-hand side of \eqref{eq: L2 thm} by $L^\infty$ norms. More precisely, one can ask
\begin{question} \label{conj}For $n\ge 2m$ and a nondegenerate phase $P$, does $\lambda$-decay  hold with some combination of $L^\infty$ and $L^2$ norms on the right-hand side of \eqref{eq: L2 thm}? 
\end{question}
In this paper, we answer the above question in the affirmative in some special cases. Although our approaches may be presented in greater generality, we will only discuss a model operator for the clearest presentation. 

Consider the model functional    
\begin{align*}  
\Lambda_\lambda(\vec{f})&=\int_{\R^3} e^{i\lambda P(x,y,z)}f_1(y+z)f_2(y-z)f_3(x+z)f_4(x-z)\\
&\qquad\cdot f_5(x+y+\sqrt{2}z)f_6(x+y-\sqrt{2}z)\eta(x,y,z)dxdydz\\
&=:\int_{\R^3}e^{i\lambda P(x)}\prod_{j=1}^6f_j(v_j\cdot(x,y,z))\eta(x,y,z)dxdydz \end{align*} 
where the vectors $v_j\in\R^3$ are defined by $v_j\cdot(x,y,z)$ is the argument of $f_j$ above.
For certain polynomial phases $P(x,y,z)$ described in the following theorem, a grouping technique leads to $\lambda$-decay. 
\begin{theorem} \label{main2} Suppose that the polynomial phase function $P:\R^3\to\R$ satisfies 
\begin{align} \label{hyp} \inf_{z\in \R}\sup_{|(x,y)|\le1}|\partial_x\partial_y(\partial_x-\partial_y)P(x,y,z)|>0. \end{align}
Then there exist $C>0$ and $\epsilon>0$ such that
\[|\Lambda_\lambda(\vec{f})|\le C(1+|\lambda|)^{-\epsilon}\|f_1\|_2\|f_2\|_\infty\|f_3\|_2\|f_4\|_\infty\|f_5\|_2\|f_6\|_\infty \]
for all $\lambda\in \R$ and all $f_1,f_3,f_5\in L^2$ and $f_2,f_4,f_6\in L^\infty$. 
\end{theorem}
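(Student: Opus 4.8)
The plan is to freeze the variable $z$, group the six functions into three according to which of $y$, $x$, $x+y$ their arguments depend on once $z$ is fixed, and recognize the resulting two-dimensional trilinear oscillatory integral as one to which Theorem~\ref{maincltt} applies. For fixed $z\in\R$ set
\[
F^z(y)=f_1(y+z)f_2(y-z),\qquad G^z(x)=f_3(x+z)f_4(x-z),\qquad H^z(u)=f_5(u+\sqrt2\,z)f_6(u-\sqrt2\,z),
\]
so that
\[
\Lambda_\lambda(\vec f)=\int_\R\Bigl(\int_{\R^2}e^{i\lambda P(x,y,z)}\,F^z(y)\,G^z(x)\,H^z(x+y)\,\eta(x,y,z)\,dx\,dy\Bigr)dz .
\]
For each fixed $z$ the inner integral has the form treated by Theorem~\ref{maincltt} with $m=2$, $n=3$, and the one-dimensional subspaces $V_1=\R(0,1)$, $V_2=\R(1,0)$, $V_3=\R(1,1)$, which lie in general position and satisfy $n=3<2m=4$; the particular value $\sqrt2$ plays no role here, since it only translates the argument of $H^z$. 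Granting (i) that the family $\{P(\cdot,\cdot,z)\}_{z\in\R}$ is uniformly nondegenerate with respect to $\{V_1,V_2,V_3\}$ and (ii) that the constant in Theorem~\ref{maincltt} can be chosen independent of $z$, that theorem bounds the inner integral by $C(1+|\lambda|)^{-\epsilon}\|F^z\|_2\|G^z\|_2\|H^z\|_2$. Since $\|F^z\|_2\le\|f_1\|_2\|f_2\|_\infty$, $\|G^z\|_2\le\|f_3\|_2\|f_4\|_\infty$, and $\|H^z\|_2\le\|f_5\|_2\|f_6\|_\infty$ uniformly in $z$, and since the inner integral vanishes for $z$ outside the compact set $\pi_z(\operatorname{supp}\eta)$, integrating in $z$ produces the claimed estimate.

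For (i), the point is that a polynomial $Q(x,y)$ is degenerate with respect to $\{V_1,V_2,V_3\}$ — that is, of the form $p_1(y)+p_2(x)+p_3(x+y)$ — if and only if $\partial_x\partial_y(\partial_x-\partial_y)Q\equiv0$. Indeed $\partial_x$, $\partial_y$, $\partial_x-\partial_y$ annihilate the three summands respectively, which gives one direction; conversely, if $LQ:=\partial_x\partial_y(\partial_x-\partial_y)Q=0$ then $(\partial_x-\partial_y)Q=a(x)+b(y)$ for some one-variable polynomials $a,b$, a particular solution of which is a polynomial in $x$ plus a polynomial in $y$, and the general solution differs from it by a polynomial in $x+y$, so $Q$ is degenerate. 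Hence $L$, viewed as a linear operator on the finite-dimensional space of real polynomials of degree $\le d:=\deg P$, has kernel exactly the degenerate polynomials and therefore descends to an injective linear map on the quotient by them; by finite-dimensionality this yields, with constants depending only on $d$ and on the norm $\|\cdot\|_d$,
\[
c_d\sup_{|(x,y)|\le1}|LQ|\ \le\ \inf_{p_1,p_2,p_3}\bigl\|Q-p_1(y)-p_2(x)-p_3(x+y)\bigr\|_d\ \le\ C_d\sup_{|(x,y)|\le1}|LQ| .
\]
Applying this with $Q=P(\cdot,\cdot,z)$, a polynomial of degree $\le d$ in $(x,y)$ uniformly in $z$, shows that hypothesis \eqref{hyp} says precisely that $P(\cdot,\cdot,z)$ is bounded away, uniformly in $z$, from the space of polynomials degenerate with respect to $\{V_1,V_2,V_3\}$, i.e.\ the uniform nondegeneracy demanded by Theorem~\ref{maincltt}.

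For (ii), the slices $\eta(\cdot,\cdot,z)$ have supports contained in a fixed compact set and are uniformly bounded in every $C^N$ norm, and inspection of the proof of Theorem~\ref{maincltt} shows that its constant depends on the amplitude only through such data, so $C$ and $\epsilon$ in the bound for the inner integral may be taken uniform in $z$. (Alternatively, one may take $\eta$ of the product form $\eta(x,y,z)=\chi(x,y)\rho(z)$, in which case the constant for the slice $\eta(\cdot,\cdot,z)$ is just $|\rho(z)|$ times the one for $\chi$, and $\int_\R|\rho|<\infty$.) I expect the main obstacle to be exactly these two uniformity points, (i) and (ii): extracting from \eqref{hyp} the uniform nondegeneracy in the sense of CLTT, and confirming that the constant of Theorem~\ref{maincltt} does not deteriorate as $z$ ranges over the support of $\eta$. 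Once they are settled, the grouping and the elementary $L^2$-versus-$L^\infty$ estimates above complete the proof.
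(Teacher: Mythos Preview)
Your proof is correct and follows essentially the same approach as the paper: freeze $z$, group the six factors into three functions of $y$, $x$, $x+y$, apply Theorem~\ref{maincltt} to the inner two-dimensional trilinear integral, and integrate over the compact $z$-support of $\eta$. Your treatment of point~(i)---characterizing degenerate polynomials as the kernel of $L=\partial_x\partial_y(\partial_x-\partial_y)$ and invoking finite-dimensionality to pass from \eqref{hyp} to uniform nondegeneracy---is in fact more detailed than the paper's, which simply asserts that \eqref{hyp} implies uniform nondegeneracy and that the constant $C(z)$ depends continuously on the relevant parameters.
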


When the polynomial phase $P$ depends only on one variable, say $x$, nondegeneracy conditions can be easily checked and we have the following result.

\begin{theorem}\label{x} Let $\Lambda_\lambda$ and the collection of $v_j$ be as above. If $P:\R\to\R$ is a polynomial of degree at least $3$ which is nondegenerate with respect to the projections $(x,y,z)\mapsto v_j\cdot(x,y,z)$ for $j=1,\ldots,6$, then there exist $C>0$ and $\eta>0$ such that
\[ |\Lambda_\lambda(\vec{f})|\le C(1+|\lambda|)^{-\epsilon}\|f_1\|_2\|f_2\|_2\|f_3\|_\infty\|f_4\|_\infty\|f_5\|_\infty\|f_6\|_\infty \]
for all $\lambda\in\R$ and all $f_1,f_2\in L^2$ and $f_3,f_4,f_5,f_6\in L^\infty$. 
\end{theorem}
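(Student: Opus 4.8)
The plan is to strip off the two $L^{2}$ functions $f_{1},f_{2}$ by a single Cauchy--Schwarz inequality, and then to treat the resulting purely $L^{\infty}$ oscillatory integral by the multilinear machinery of \cite{cltt}, exploiting heavily that the phase $P$ depends on a single variable.

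Since $\Lambda_{\lambda}$ is multilinear in $f_{3},\dots,f_{6}$, we may normalize $\|f_{j}\|_{\infty}=1$ for $j=3,4,5,6$; and since $\eta$ has compact support, one has $|\Lambda_{\lambda}(\vec f)|\le C\|f_{1}\|_{2}\|f_{2}\|_{2}$ trivially (Cauchy--Schwarz in $(y,z)$ after bounding the inner $x$-integral), so we may also assume $|\lambda|\ge 1$. Writing
\[
\Lambda_{\lambda}(\vec f)=\int_{\R^{2}}f_{1}(y+z)f_{2}(y-z)\,\Psi(y,z)\,dy\,dz,\qquad
\Psi(y,z):=\int_{\R}e^{i\lambda P(x)}f_{3}(x+z)f_{4}(x-z)f_{5}(x+y+\sqrt2\,z)f_{6}(x+y-\sqrt2\,z)\,\eta(x,y,z)\,dx,
\]
and changing variables $(y,z)\mapsto(s,t)=(y+z,\,y-z)$, Cauchy--Schwarz on $\R^{2}$ (pairing $f_{1}(s)f_{2}(t)$ against $\Psi$) gives $|\Lambda_{\lambda}(\vec f)|\le\tfrac1{\sqrt2}\|f_{1}\|_{2}\|f_{2}\|_{2}\,\|\Psi\|_{L^{2}(\R^{2})}$. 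Thus Theorem~\ref{x} reduces to the estimate $\|\Psi\|_{L^{2}(\R^{2})}\le C|\lambda|^{-\epsilon}$, a quantity in which only the four $L^{\infty}$ functions and the phase survive. This is \emph{not} directly covered by Theorem~\ref{maincltt}: expanding $\|\Psi\|_{L^{2}}^{2}$ produces an oscillatory integral over $(x,x',y,z)$ with phase $\lambda(P(x)-P(x'))$ and eight factors $f_{3}^{},\overline{f_{3}^{}},\dots,f_{6}^{},\overline{f_{6}^{}}$ along one-dimensional projections, which sits exactly at the critical count $n=2m$ and whose projections fail to be in general position.

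To handle $\|\Psi\|_{L^{2}}^{2}$ I would first substitute $x'=x+h_{1}$ and integrate out $y$: since $y$ enters the integrand only through $f_{5},f_{6},\eta$ and $\|f_{5}\|_{\infty},\|f_{6}\|_{\infty}\le1$, the $y$-integral contributes only a kernel bounded by $C\,\mathbf 1\{(x,z)\in K,\ |h_{1}|\le c_{0}\}$, leaving
\[
\|\Psi\|_{L^{2}}^{2}=\int_{\R^{3}}e^{i\lambda\bigl(P(x)-P(x+h_{1})\bigr)}\,f_{3}(x+z)\overline{f_{3}(x+h_{1}+z)}\,f_{4}(x-z)\overline{f_{4}(x+h_{1}-z)}\,W(x,h_{1},z)\,dx\,dh_{1}\,dz .
\]
From here one follows the slicing-plus-Gowers-uniformity scheme of \cite{cltt}: iterate Cauchy--Schwarz in the $x$-direction (introducing further difference parameters $h_{2},\dots$, and integrating the transverse variables against the smooth cutoff at each stage), which replaces the phase by an iterated difference $\Delta_{h_{d-1}}\cdots\Delta_{h_{1}}P(x)$. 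Because $d=\deg P\ge3$, there are enough differences available to bring this down to an affine function of $x$ with slope a fixed nonzero multiple of $\lambda\,h_{1}\cdots h_{d-1}$, while the hypothesis that $P$ is nondegenerate with respect to the $\{v_{j}\}$ is what keeps this slope from vanishing identically and makes the implied constants uniform over admissible phases. Splitting each $h_{i}$ into the ranges $|h_{i}|\lessgtr|\lambda|^{-\sigma}$ (the small-parameter part estimated trivially by the measure of that range), on the large-parameter part the $x$-integral is the Fourier transform of a bounded, compactly supported amplitude evaluated at frequency $\asymp\lambda\,h_{1}\cdots h_{d-1}\gtrsim|\lambda|^{1-(d-1)\sigma}$; Cauchy--Schwarz and Plancherel in the difference parameters then convert this into a power of $|\lambda|$, and optimizing $\sigma$ yields $\|\Psi\|_{L^{2}}^{2}\le C|\lambda|^{-c}$.

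The genuinely delicate point --- and where I expect the real work to be --- is that the amplitudes stay merely bounded throughout, because $f_{3},\dots,f_{6}$ are only $L^{\infty}$: at the critical function count there is no slack to smooth the amplitude and run a clean van der Corput, so one must track how the amplitude depends on the difference parameters $h_{i}$ (it is a product of translates of the fixed $L^{\infty}$ functions) and control it by a dyadic decomposition of those functions together with pigeonholing, in the manner of \cite{cltt}. It is precisely the hypothesis that $P$ depends only on $x$ that makes this feasible here: every differencing happens in the single direction that the phase sees, so no differencing step is ``wasted'' on a function, and the whole scheme closes up just barely at $n=2m$.
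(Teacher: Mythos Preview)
Your route diverges from the paper's at the very first move, and that divergence is what forces all the later hand-waving. You pull \emph{both} $f_1$ and $f_2$ outside by a single Cauchy--Schwarz, which lands you at the critical count $n=2m$ after squaring $\Psi$; the paper instead pulls out \emph{only} $f_1$, keeping $f_2$ inside the inner operator $T_\lambda(f_2,\dots,f_6)(y,z)$. Squaring $T_\lambda$ and substituting $x_0=x+\zeta$ then yields, for each fixed $\zeta$, a \emph{five}-linear oscillatory integral over $(x,y,z)$ with phase $P(x)-P(x+\zeta)$ and factors $F_j^\zeta(v_j\cdot(x,y,z))$, $j=2,\dots,6$. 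Since $5<2\cdot 3$, Theorem~\ref{maincltt} applies as a black box once one checks uniform nondegeneracy; this is done with the wave operator $\square=\partial_x^2+\partial_y^2-\partial_z^2$, which kills every $p_j(v_j\cdot(x,y,z))$ (the $v_j$ lie on the light cone) but sends the phase to $P''(x)-P''(x+\zeta)\not\equiv 0$ because $\deg P\ge 3$. A split $|\zeta|\lessgtr\rho$ and the choice $\rho=|\lambda|^{-1/2}$ finish the argument. No iterated differencing, no rough amplitudes, no Plancherel gymnastics.

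By contrast, your scheme after the $\|\Psi\|_2^2$ expansion is only a sketch at the points that matter. When you ``integrate out $y$'' you do not get a smooth kernel: $W(x,h_1,z)$ carries the rough $f_5,f_6$ dependence and mixes $x$ and $z$, so it is not a function along a single projection. The subsequent iterated Cauchy--Schwarz in $x$ keeps everything bounded, but your endgame (``Cauchy--Schwarz and Plancherel in the difference parameters then convert this into a power of $|\lambda|$'') is precisely where the rough amplitude bites: the $x$-integral of $e^{ic\lambda h_1\cdots h_{d-1}x}$ against a merely bounded amplitude has no pointwise decay, and turning this into an honest $L^2$/Plancherel gain while tracking the $z$- and $h_i$-dependence is a real argument you have not supplied. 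It is plausible this can be pushed through, but you are essentially reproving a variant of CLTT by hand in a harder configuration, whereas keeping $f_2$ inside lets you quote CLTT directly.
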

It is clear from the proof that the inequality in Theorem \ref{x} holds if we switch the index $2$ with any of $3,4,5,6$. The proofs of Theorem \ref{main2} and Theorem \ref{x} are contained in Section \ref{section: grouping} and Section \ref{section: P(x)} respectively. Some examples will also be provided, which demonstrate slight generalizations of each technique as well as comparisons in the approaches. 

This material is based upon work supported by the National Science Foundation under Grant Number DMS 1641020. The first author was partially supported by LTS grant DO 0052. The second author was supported by an NSF graduate research fellowship under Grant No. DGE 1106400.

\section{A grouping technique } \label{section: grouping}
We prove Theorem \ref{main2} using the $L^2$ theorem (stated here as Theorem \ref{maincltt}) of CLTT and a grouping trick. A discussion of examples follows.

\begin{proof}[Proof of Theorem \ref{main2}] 
For each $z\in\R$, group the $f_j$ by defining $F_j^z:\R\to\R$ as
\begin{align*}
    F_1^z(y)&:=f_1(y+z)f_2(y-z)\\
    F_2^z(x)&:=f_3(x+z)f_4(x-z)\\
    F_3^z(x+y)&:=f_5(x+y+\sqrt{2}z)f_6(x+y-\sqrt{2}z).
\end{align*}
Rewrite $\Lambda_\lambda(\vec{f})$ using the $F_j^z$ and $P^z(x,y)=P(x,y,z)$ as
\begin{align} 
\Lambda_\lambda(\vec{f})&=\int_{\R^3}e^{iP^z(x,y)}F_1^z(y)F_2^z(x)F_3^z(x+y)\eta(x,y,z)dydxdz \\
\label{grouped} &= \int_\R\left(\int_{\R^2}e^{iP^z(x,y)}F_1^z(y)F_2^z(x)F_3^z(x+y)\eta(x,y,z)dydx\right) dz.\end{align} 
For each $z$, the hypothesis (\ref{hyp}) implies that $P(x,y,z)$ (as a function of $(x,y)$ with $z$ fixed) is nondegenerate with respect to the projections $(x,y)\mapsto x$, $(x,y)\mapsto  y$, and $(x,y)\mapsto x+y$. Furthermore, the nondegeneracy is uniform in $z$. Thus we may apply the $L^2$ theorem from CLTT to obtain the bound
\[ \left|\int_{\R^2}e^{iP^z(x,y)}F_1^z(x)F_2^z(y)F_3^z(x+y)\eta(x,y,z)dydx\right|\le C(z)(1+|\lambda|)^{-\epsilon}\|F_1^z\|_2\|F_2^z\|_2\|F_3^z\|_2,\]
where $\epsilon>0$ is independent of $z$ and $C(z)<\infty$ depends on the dimension, the degree of the polynomial $P$, the nondegeneracy of $P^z$ (specifically the quantity on the left-hand  side of (\ref{hyp})), and on the uniform norms of some partial derivatives $\eta$. Since the constant $C$ depends continuously on these parameters, and the nondegeneracy is uniform in $z$ and the uniform norms of partial derivatives of $\eta$ do not depend on $z$, $C(z)$ is bounded for all $z$ by a constant $C<\infty$. Also note that for each $z$, by the definition of the $F_j^z$, $\|F_1^z\|_2\le \|f_1\|_2\|f_2\|_\infty$, $\|F_2^z\|_2\le \|f_3\|_2\|f_4\|_\infty$, and $\|F_3^z\|_2\le\|f_5\|_2\|f_6\|_2$. Let $S\subset\R$ be a measurable subset of finite Lebesgue measure which contains the set $\{z\in\R:\eta(x,y,z)\not=0\quad\text{for some}\quad(x,y)\in\R^2\}$. Putting the above discussion together with the expression from (\ref{grouped}) yields the desired bound
\begin{align*}
    |\Lambda_\lambda(\vec{f})|&\le \int_{\R}\left|\int_{\R^2}e^{i\lambda P^z(x,y)}F_1^z(x)F_2^z(y)F_3^z(x+y)\eta(x,y,z)dydx\right|dz\\
    &= \int_S\left|\int_{\R^2}e^{i\lambda P^z(x,y)}F_1^z(x)F_2^z(y)F_3^z(x+y)\eta(x,y,z)dydx \right|dz\\
    &\le |S|C(1+|\lambda|)^{-\epsilon}\|f_1\|_2\|f_2\|_\infty\|f_3\|_2\|f_4\|_\infty\|f_5\|_2\|f_6\|_\infty.
\end{align*}
This finishes the proof of Theorem \ref{main2}.
\end{proof}
\bigskip

Now we turn to examples of phases for which Theorem \ref{main2} applies, but not CLTT's original theorem or Theorem \ref{x}. Example \ref{flex1} shows a grouping technique in which more than one variable must be fixed. Example \ref{flex2} describes some flexibility in the grouping technique, where there are multiple ways to group terms, leading to $\lambda$-decay results with different $L^p$ norms.

Example \ref{1not2} discuss $\lambda$-decay for functionals of the form
\begin{align}
\label{basic}    \Lambda_\lambda(P,\vec{f})&= \int_{\R^3}e^{i\lambda P(x,y,z)}f_1(y+z)f_2(y-z)f_3(x+z)f_4(x-z)\\
\nonumber    &\qquad\qquad\cdot f_5(x+y+\sqrt{2}z)f_6(x+y-\sqrt{2}z)\eta(x,y,z)dyxdz .
\end{align}
We also discuss this type example in examples \ref{2not1}, \ref{both} in the next section, after presenting the proof of Theorem \ref{x}. This is an example of a $6$-linear functional with projections from $\R^3$ to vectors on the light cone $\{(x,y,z):\,\, x^2+y^2=z^2\}$. If the phase $P(x,y,z)$ is simply nondegenerate, then Theorem 2.3 of \cite{cltt} gives $\lambda$-decay with $L^\infty$ norms, i.e.  
\[ |\Lambda_\lambda(P\vec{f})|\le C|\lambda|^{-\epsilon}\prod_{j=1}^6\|f_j\|_\infty. \]
We will henceforth refer to this result as the $L^\infty$ theorem of CLTT. Regardless of the (simple) nondegeneracy condition on $P(x,y,z)$, no theorem from \cite{cltt} gives any mixture of $L^2$ and $L^\infty$ bounds since their $L^2$ theorem (Theorem \ref{maincltt}) applies only for strictly fewer than $2\cdot 3$ factors.

\begin{example}\label{1not2} $P(x,y,z)=x^2y+2xyz$: Theorem \ref{main2} applies, but not Theorem \ref{x}. \end{example}  
\noindent The hypothesis 
\[ \inf_{z\in\R}\sup_{|(x,y)|\le1}|\partial_x\partial_y(\partial_x-\partial_y)(x^2y+2xyz)|=2>0 \]
of Theorem \ref{main2} is satisfied. Thus there exists $C>0$ and $\epsilon>0$ such that 
\[ |\Lambda_\lambda(x^2y+2xyz,\vec{f})|\le C(1+|\lambda|)^{-\epsilon}\|f_1\|_2\|f_2\|_\infty\|f_3\|_2\|f_4\|_\infty\|f_5\|_2\|f_6\|_\infty \]
for all $\lambda\in\R$ and $f_1,f_3,f_5\in L^2$ and $f_2,f_4,f_6\in L^\infty$. Note that the proof of Theorem \ref{x} cannot be repeated for this phase because for each $\z$, there exist polynomials $p_j:\R\to\R$ such that
\[P(x,y,z)-P(x+\z,y,z) = p_2(y-z)+p_3(x+z)+p_4(x-z)+p_5(x+y+\sqrt{2}z)+p_6(x+y-\sqrt{2}z) . \]
This is because 
\begin{align*} 
P(x,y,z)-P(x+\z,y,z)&=x^2y+2xyz-(x+\z)^2y-2(x+\z)yz\\
&= -2\z xy-\z^2y-2\z yz
\end{align*} 
and we can write
\[4xy+4yz=-2(y-z)^2-(x+z)^2-(x-z)^2+(x+y+\sqrt{2}z)^2+(x+y-\sqrt{2}z)^2 \]
\[\text{and}\qquad y=y-z+\frac{1}{2}(x+z)-\frac{1}{2}(x-z).\]

Examples \ref{flex1} and \ref{flex2} below describe grouping techniques for functionals with more factors and integrated over $\R^4$. 

\begin{example}\label{flex1} A grouping approach fixing 2 variables. \end{example} \noindent Consider the functional    
\begin{align*}  
&\Lambda_{2,\lambda}(\vec{f}):=\int_{\R^4} e^{i\lambda x^2y}f_1(y+z)f_2(y-z)f_3(x+z)f_4(x-z)f_5(x+y+\sqrt{2}z)\\
&\qquad\qquad\cdot f_6(x+y-\sqrt{2}z)f_7(x+w)f_8(x-w)f_9(x-z+2w) \eta(x,y,z,w)dxdydzdw. \end{align*} 


Theorems giving $\lambda$-decay from CLTT do not apply because there are too many factors ($9\not<2\cdot4$) for the $L^2$ theorem to apply, and the phase $x^2y$ is annihilated by the differential operator from the definition of simple nondegeneracy, so the $L^\infty$ theorem does not apply. Attempting to fix individual variables separately does not lead to $\lambda$-decay using an argument analogous to the proof of Theorem \ref{main2} for the following reasons.

\begin{enumerate}
    \item Fix w. Then 
    \begin{align*} 
    \Lambda_{2,\lambda}(\vec{f})&=\int_{\R}\left(\int_{\R^3}e^{i\lambda x^2y}f_1(y+z)f_2(y-z)f_3(x+z)F_4^z(x-z)f_5(x+y+\sqrt{2}z)\right.\\
    &\cdot \qquad \qquad \left.f_6(x+y-\sqrt{2}z)F_7^z(x)\eta(x,y,z,w)dydxdz\right)dw \end{align*}
    where $F_4^z(x)=f_2(x)f_9(x+2w)$ and $F_7^w(x)=f_7(x+w)f_8(x-w)$. The integrand in parentheses has a product of $7$ functions, and since $7>2\cdot3$, we cannot use Theorem \ref{maincltt} to get an $L^2$ result. Since the phase $x^2y$ is also not simply nondegenerate with respect to $x,y,z$, there are no other theorem which apply. 
    \item Fix z. Then 
    \begin{align*} 
    \Lambda_{2,\lambda}(\vec{f})&=\int_{\R}\left(\int_{\R^3}e^{i\lambda x^2y}F_1^z(y)F_2^z(x)F_3^z(x+y)f_7(x+w)f_8(x-w)\right.\\
    &\cdot \qquad\qquad \left. F_9^z(x+2w)\eta(x,y,z,w)dydxdz\right)dw \end{align*}
    where $F_1^z(y)=f_1(y+z)f_2(y-z)$, $F_2^z(x)=f_3(x+z)f_4(x-z)$, $F_3^z(x)=f_5(x+\sqrt{2}z)f_6(x-\sqrt{2}z)$, and $F_9^z(x)=f_9(x-z)$. This leads to a product of six factors, and since $6\not>6$, we cannot use Theorem \ref{maincltt} to get an $L^2$ result. Since the phase $x^2y$ is also not simply nondegenerate with respect to $x,y,z$, the $L^\infty$ theorem also does not apply. 
    \item Fixing $x$ does not work because for each $x$, the phase $x^2y$ is degenerate with respect to the grouped projections $(y,z,w)\mapsto y+z$ and $(y,z,w)\mapsto z$. Fixing $y$ does not work for an analogous reason. 
\end{enumerate} 
Now fix $z$ and $w$. Then  
\begin{align*} 
    \Lambda_{2,\lambda}(\vec{f})&=\int_{\R^2}\left(\int_{\R^2}e^{i\lambda x^2y}F_1(x,z,w)F_2(y,z,w)F_3(x+y,z,w)\eta(x,y,z,w)dydx\right)dzdw 
\end{align*}
where 
\[ F_1(x,z,w)=f_3(x+z)f_4(x-z)f_7(x+w)f_8(x-w)f_9(x-z+2w), \]
\[ F_2(y,z,w)=f_1(y+z)f_2(y-z),\quad \text{and} \quad F_3(x+y,z,w)=f_5(x+y+\sqrt{2}z)f_6(x+y-\sqrt{2}z). \]
The phase $x^2y$ is nondegenerate with respect to the projections $(x,y)\mapsto$ $x$, $y$, and $x+y$. Thus by the same argument as in the proof of Theorem \ref{main2}, the $L^2$ theorem of CLTT gives constants $C>0$ and $\epsilon>0$ so that
\[|\Lambda_{2,\lambda}(\vec{f})|\le C(1+|\lambda|)^{-\epsilon}\|f_1\|_2\|f_2\|_\infty\|f_3\|_2\|f_4\|_\infty\|f_5\|_2\|f_6\|_\infty\|f_7\|_\infty\|f_8\|_\infty\|f_9\|_\infty , \]
which holds for all $\lambda\in\R$ and $f_1,f_3,f_5\in L^2$ and $f_2,f_4,f_6,f_7,f_8,f_9\in L^\infty$.

\begin{example} \label{flex2} Multiple grouping approaches apply. 
\end{example} 
\noindent In this example, it is more optimal (i.e. leads to more $L^2$ than $L^\infty$ bounds) to fix one variable. 
Consider the functional    
\begin{align*}  
&\Lambda_{3,\lambda}(\vec{f}):=\int_{\R^4} e^{i\lambda x^2y}f_1(y+z)f_2(y-z)f_3(x+z)f_4(x-z)f_5(x+y+\sqrt{2}z)\\
&\qquad\qquad\qquad\qquad\cdot f_6(x+y-\sqrt{2}z)f_7(x+w)f_8(x-w) \eta(x,y,z,w)dxdydzdw. \end{align*} 
Theorems from CLTT do not gives $\lambda$-decay because there are too many factors ($8\not<2\cdot4$) and $x^2y$ is not simply nondegenerate with respect to the projection maps.  

Fix $z$. Then 
\begin{align*} 
\Lambda_{3,\lambda}(\vec{f}):= \int_{\R}\left( \int_{\R^3}e^{i\lambda x^2y}F_1^z(y)F_2^z(x)F_3^z(x+y)f_7(x+w)f_8(x-w)\eta(x,y,z,w)dxdydw\right)dz. 
\end{align*} 
where $F_1^z(y)=f_1(y+z)f_2(y-z)$, $F_2^z(x)=f_3(x+z)f_4(x-z)$, and $F_3^z(x)=f_5(x+\sqrt{2}z)f_6(x-\sqrt{2}z)$. The phase $x^2y$ is nondegenerate with respect to the projections $(x,y,w)\mapsto $ $x$, $y$, $x+y$, $x+w$, and $x-w$ since the operator $\partial_x\partial_y(\partial_x-\partial_y)$ annihilates the projections but not the phase. There are also $5<2\cdot 3$ factors. This leads to a bound of 
\[ |\Lambda_2(\vec{f})|\le C(1+|\lambda|)^{-\epsilon}\|f_1\|_\infty\|f_2\|_2\|f_3\|_\infty\|f_4\|_2\|f_5\|_\infty\|f_6\|_2\|f_7\|_2\|f_8\|_2,\]
where the constants $C,\epsilon>0$ are independent of the $f_j$. 

Now if we fix $z$ and $w$, 
\begin{align*} 
\Lambda_{3,\lambda}(\vec{f}):= \int_{\R^2}\left( \int_{\R^2}e^{i\lambda x^2y}F_1(x,z,w)F_2(y,z,w)F_3^z(x+y,z,w)\eta(x,y,z,w)dxdy\right)dwdz. 
\end{align*} 
where $F_1(x,z,w)=f_3(x+z)f_4(x-z)f_7(x+w)f_8(x-w)$, $F_2(x,z,w)=f_1(x+z)f_2(x-z)$, and $F_3(x,z,w)=f_5(x+z)f_6(x-z)$. The phase $x^2y$ is nondegenerate with respect to the projections $(x,y)\mapsto $ $x$, $y$, and $x+y$ since the operator $\partial_x\partial_y(\partial_x-\partial_y)$ annihilates the projections but not the phase. There are also $3<2\cdot 2$ factors, which leads to the bound
\[|\Lambda_4(\vec{f})|\le C(1+|\lambda|)^{-\epsilon}\|f_1\|_\infty\|f_2\|_2\|f_3\|_\infty\|f_4\|_2\|f_5\|_\infty\|f_6\|_2\|f_7\|_\infty\|f_8\|_\infty \]
where the constants $C,\epsilon>0$ are independent of the $f_j$. Since there are fewer $L^2$ norms, this is a weaker bound than we obtained when only $z$ was fixed.

\section{The case $P(x,y,z)=P(x)$ with deg $P\ge 3$} \label{section: P(x)}
In this section, we prove Theorem \ref{x} and discuss examples. 

\begin{proof}[Proof of Theorem \ref{x}] 
Manipulate the functional
\begin{align*}
&\Lambda_\lambda(\vec{f})= \int_{\R^3}e^{i\lambda P(x)}f_1(y+z)f_2(y-z)f_3(x+z)f_4(x-z)\\
&\qquad\qquad\qquad\qquad \cdot f_5(x+y+\sqrt{2}z)f_6(x+y-\sqrt{2}z)\eta(x,y,z)dxdydz \\
&= \int_{\R^2}f_1(y+z)\int_{\R}e^{i\lambda P(x)}f_2(y-z)f_3(x+z)f_4(x-z) \\
&\qquad\qquad\qquad\qquad \cdot f_5(x+y+\sqrt{2}z)f_6(x+y-\sqrt{2}z)\eta(x,y,z)dxdydz \\
&=: \int_{\R^2}f_1(y+z)T_\lambda(f_2,f_3,f_4,f_5,f_6)(y,z)dydz . 
\end{align*}
Since $\eta(x,y,z)$ has compact support, the integral in the final line above is equal to integrating over $(y,z)$ in a fixed compact set. Our goal is to bound $T_\lambda: L^2\times L^\infty\times L^\infty\times L^\infty\times L^\infty\to L^2$. Analyze the quantity
\[ \|T_\lambda(f_2,f_3,f_4,f_5,f_6)\|_2^2=\int_{\R^2}|T_\lambda(f_2,f_3,f_4,f_5,f_6)(y,z)|^2dydz\]
which equals
\begin{align*}
    \int_{\R^2}\int_{\R^2}e^{i\lambda (P(x)-P(x_0))} \prod_{j=2}^6f_j(v_j\cdot(x,y,z))\overline{f_j}(x_0,y,z)\eta(x,y,z)\overline{\eta}(x_0,y,z)dxdx_0dydz.
\end{align*}
Make the change of variables $(x,x_0)=(x,x+\z)$: 
\begin{align*}
&\int_{\R^4}e^{i\lambda (P(x)-P(x+\z))} \prod_{j=3}^6f_j(v_j\cdot(x,y,z))\overline{f_j}(v_j\cdot(x+\z,y,z))\eta(x,y,z)\overline{\eta}(x+\z,y,z)dxd\z dydz    \\
&= \int_{\R}\int_{\R^3}e^{i\lambda (P(x)-P(x+\z))} \prod_{j=2}^6f_j(v_j\cdot(x,y,z))\overline{f_j}(v_j\cdot(x,y,z)+v_j^1\z)\tilde{\eta}_\z(x,y,z) dxdydz d\z  \\
&= \int_{\R}\left(\int_{\R^3}e^{i\lambda (P(x)-P(x+\z))} \prod_{j=2}^6F^\z(v_j\cdot(x,y,z)) \tilde{\eta}_\z(x,y,z)dxdydz\right) d\z
\end{align*}
where $F_j^\z(v_j\cdot(x,y,z))=f_j(v_j\cdot(x,y,z))\overline{f_j}(v_j\cdot(x,y,z)+v_j^1\z)$ and $\tilde{\eta}_\z(x,y,z)=\eta(x,y,z)\overline{\eta}(x+\z,y,z)$. The integrand is supported for $\z$ in a compact set $B$, so it suffices to bound 
\[ \int_{B}\left|\int_{\R^3}e^{i\lambda (P(x)-P(x+\z))} \prod_{j=2}^6F^\z(v_j\cdot(x,y,z))\tilde{\eta}_\z(x,y,z) dxdydz\right| d\z. \]
It suffices to consider $|\lambda|\ge 1$. Let $\rho\in(0,1)$ be a parameter to be chosen later. First consider the integrand above over the set where $\z\in B$ and $|\z|\le \rho$. Then 
\begin{align*}  
& \int_{\{\z\in B:|\z|\le \rho\}}\left|\int_{\R^3}e^{i\lambda (P(x)-P(x+\z))} \prod_{j=2}^6F_j^\z(v_j\cdot(x,y,z)\tilde{\eta}_\z(x,y,z) dxdydz\right| d\z \\
&\le C \|f_3\|_\infty^2\|f_4\|_\infty^2\|f_5\|_\infty^2\|f_6\|_\infty^2\int_{\{\z\in B:|\z|\le \rho\}} \|F_2^\z\|_2d\z  \\
&\le \tilde{C}\|f_2\|_2^2\|f_3\|_\infty^2 \|f_4\|_\infty^2\|f_5\|_\infty^2\|f_6\|_\infty^2\rho^{1/2}.
\end{align*} 

Now consider the remaining $\z\in B$, i.e. those that satisfy $|\z|\ge \rho$. Then $P(x)-P(x+\z)$ is uniformly nondegenerate (for all $|\z|\ge \rho$) with respect to the projections from $(x,y,z)$ to $y-z$, $x+z$, $x-z$, $x+y+\sqrt{2}z$, $x+y-\sqrt{2}z$. This is because if
\begin{align*}  
\inf_{M\ge |\z|\ge 1}\inf_{p_j}\|P(x)-P(x+\z)-\sum_{j=2}^6p_j(v_j\cdot(x,z,z))\|=0, 
\end{align*}
then there are polynomials $p_j$ of the same degree as $P$ such that
\[ P(x)-P(x+\z)=p_2(y-z)-p_3(x+z)-p_4(x-z)-p_5(x+y+\sqrt{2}z)-p_6(x+y-\sqrt{2}z) .\]
Clearly $\square=\partial_x^2+\partial_y^2-\partial_z^2$ annihilates the right-hand side. Using the hypothesis that $\deg P\ge 3$, $\square P(x)-\square P(x+\z)=P''(x)-P''(x+\z)\not\equiv 0$, which contradicts the above displayed equality. This means that for $\z\in B$ satisfying $|\z|\ge \rho$, $|\rho|^{-1}( P(x)-P(x+\z))$ is uniformly nondegenerate. Since $5<2\cdot 3$, the $L^2$ theorem from CLTT gives the $\lambda$-decay

\begin{align*}  
&\int_{\{\z\in B:|\z|> \rho\}}\left|\int_{\R^3}e^{i\lambda (P(x)-P(x+\z))} \prod_{j=2}^6F_j^\z(v_j\cdot(x,y,z)\tilde{\eta}_\z(x,y,z) dxdydz\right| d\z \\
&= \int_{\{\z\in B:|\z|> \rho\}}\left|\int_{\R^3}e^{i(|\rho|\lambda |\rho|^{-1}(P(x)-P(x+\z))}F_2^\z(y-z)F_3^\z(x+z)F_4^\z(x-z)\right.\\
&\qquad\qquad\qquad\qquad\qquad\qquad\left.\cdot F_5^\z(x+y+\sqrt{2}z)F_6^\z(x+y-\sqrt{2}z)\tilde{\eta}_\z(x,y,z) dxdydz\right| d\z \\
&\le  \int_{\{\z\in B:|\z|> \rho\}}C(1+|\lambda||\rho|)^{-\epsilon}\|F_2^\z\|_2\|F_3^\z\|_2\|F_4^\z\|_2\|F_5^\z\|_2\|F_6^\z\|_2 d\z \\
&\le  \int_{\{\z\in B:|\z|> \rho\}}\tilde{C}(1+|\lambda||\rho|)^{-\epsilon}\|F_2^\z\|_2\|f_3\|_\infty^2\|f_4\|_\infty^2\|f_5\|^2_\infty\|f_6\|_\infty^2 d\z \\
&\le  \tilde{C}(1+|\lambda||\rho|)^{-\epsilon}\|f_3\|_\infty^2\|f_4\|_\infty^2\|f_5\|_\infty^2\|f_6\|_\infty^2 |B|^{1/2}\left(\int_{\R}\|F_2^\z\|_2^2d\z\right)^{1/2}\\
&=  \tilde{C}(1+|\lambda||\rho|)^{-\epsilon}\|f_2\|_2^2\|f_3\|_\infty^2\|f_4\|_\infty^2 \|f_5\|_\infty^2\|f_6\|_\infty^2.
\end{align*} 
Putting the above bounds together, we obtain
\begin{align*}
&\int_{B}\left|\int_{\R^3}e^{i\lambda (P(x)-P(x+\z))} \prod_{j=2}^6F^\z(v_j\cdot(x,y,z)\tilde{\eta}_\z(x,y,z) dxdydz\right| d\z\\
&\le C[(1+|\lambda||\rho|)^{-\epsilon}+\rho^{1/2}]\|f_2\|_\infty^2\|f_3\|_2^2\|f_4\|_\infty^2\|f_5\|_\infty^2\|f_6\|_\infty^2 .
\end{align*}
Choose $\rho=|\lambda|^{-1/2}$. We have proved that $T_\lambda$ is bounded with $\lambda$-decay from $L^2\times L^\infty\times L^\infty\times L^\infty\times L^\infty\to L^2$. Using this result in the functional $\Lambda_\lambda$ we manipulated at the beginning of the proof, conclude
\begin{align*}
&|\Lambda(\vec{f})|= \left|\int_{\R^3} e^{i\lambda P(x)}f_1(y+z)f_2(y-z)f_3(x+z)f_4(x-z)\right.\\
&\qquad\qquad\qquad\qquad\left.\cdot f_5(x+y+\sqrt{2}z)f_6(x+y-\sqrt{2}z)\eta(x,y,z)dxdydz \right|\\
&\qquad\qquad = \left|\int_{S}\int_{\R} f_1(y+z)T_\lambda(f_2,f_3,f_4,f_5,f_6)(y,z)dydz\right|\\
&\qquad\qquad \le \left(\int_S\int_{\R}|f_1(y+z)|^2dydz\right)^{1/2}\|T_\lambda(f_2,f_3,f_4,f_5,f_6)\|_2 \\
&\qquad\qquad \le \|f_1\|_2|S|^{1/2}C(1+|\lambda|)^{-\tilde{\epsilon}}\|f_2\|_2\|f_3\|_\infty\|f_4\|_\infty\|f_5\|_\infty\|f_6\|_\infty ,
\end{align*}
where $\tilde{\epsilon}=\min(\epsilon,1)/4$ and $S$ is a finite-measure set with the property that $\text{supp } \eta\subset \R^2\times S$. 
This proves Theorem \ref{x}.
\end{proof} 

\begin{remark}\label{rem} The above argument proving $\lambda$-decay for the special case $P(x)$ works for any polynomial phase $P(x,y,z)$ of degree $d\ge 3$ for which the quantity
\[ \inf_{M\ge |\z|\ge 1}\inf_{p_j}\|P(x,y,z)-P(x+\z,y,z)-\sum_{j=2}^6p_j(v_j\cdot(x,y,z))\|_d \]
is nonzero. We describe in the following section some examples of phases $P(x,y,z)$ for which the grouping theorem (Theorem \ref{main2}) applies but for which this displayed nondegeneracy quantity vanishes. 
\end{remark}

\begin{remark}Also observe that any phases $P(x)$ for which $\lambda$-decay is proved imply corresponding $\lambda$-decay results for the phase $P(y)$ by performing the change of variable switching $x$ with $y$. 
\end{remark}

Now we demonstrate more examples.
\begin{example} \label{2not1} $P(x,y,z)=x^3$: Theorem \ref{x} applies, but not Theorem \ref{main2}. \end{example} 
\noindent The hypothesis of Theorem \ref{main2} is not satisfied since $\partial_y\partial_x(\partial_x-\partial_y)x^3=0$. However, $x^3$ is a polynomial in $x$ of degree at least 3, so by Theorem \ref{x}, there exist $C>0$ and $\epsilon>0$ so that
\[ |\Lambda_\lambda(x^3,\vec{f})|\le C(1+|\lambda|)^{-\epsilon}\|f_1\|_2\|f_2\|_2\|f_3\|_\infty\|f_4\|_\infty\|f_5\|_\infty\|f_6\|_\infty \]
for all $\lambda\in\R$ and all $f_1,f_2\in L^2$ and $f_3,f_4,f_5,f_6\in L^\infty$.

\begin{example}\label{both} $P(x,y,z)=x^2y^2$: Theorems \ref{main2} and \ref{x} apply. 
\end{example}
\noindent The hypothesis 
\[ \inf_{z\in\R}\sup_{|(x,y)|\le1}|\partial_x\partial_y(\partial_x-\partial_y)x^2y^2|>0 \]
of Theorem \ref{main2} is satisfied. Thus there exists $C>0$ and $\epsilon>0$ such that 
\[ |\Lambda_\lambda(x^2y^2,\vec{f})|\le C(1+|\lambda|)^{-\epsilon}\|f_1\|_2\|f_2\|_\infty\|f_3\|_2\|f_4\|_\infty\|f_5\|_2\|f_6\|_\infty \]
for all $\lambda\in\R$ and $f_1,f_3,f_5\in L^2$ and $f_2,f_4,f_6\in L^\infty$. By the Remark \ref{rem}, we may use the argument in the proof of Theorem \ref{x} if we check that  
\[ \inf_{M\ge |\z|\ge 1}\inf_{p_j}\|P(x,y,z)-P(x+\z,y,z)-\sum_{j=2}^6p_j(v_j\cdot(x,y,z))\|\not=0,\]
where $v_2=(0,1,-1)$, $v_3=(1,0,1)$, $v_4=(1,0,-1)$, $v_5=(1,1,\sqrt{2})$, and $v_6=(1,1,-\sqrt{2})$. If the above quantity is 0, then there are polynomials $p_j:\R\to\R$ and $\z\in [1,M]$ such that 
\[ x^2y^2-(x+\z)^2y^2=p_2(y-z)+p_3(x+z)+p_4(x-z)+p_5(x+y+\sqrt{2}z)+p_6(x+y-\sqrt{2}z). \]
But if we apply $(\partial_y+\partial_z)\partial_y(\partial_x-\partial_y)$ to both sides, the right-hand side is annihilated and the left-hand  side is $-4\z$, which is a contradiction. Thus an argument analogous to the proof of Theorem \ref{x} gives constants $C>0$ and $\epsilon>0$ such that
\[  |\Lambda_\lambda(x^2y^2,\vec{f})|\le C(1+|\lambda|)^{-\epsilon}\|f_1\|_2\|f_2\|_2\|f_3\|_\infty\|f_4\|_\infty\|f_5\|_\infty\|f_6\|_\infty \]
for all $\lambda\in\R$ and all $f_1,f_2\in L^2$ and $f_3,f_4,f_5,f_6\in L^\infty$.


\end{document}